\documentclass[onefignum,onetabnum]{siamart171218}



\usepackage{lipsum}
\usepackage{amsfonts}
\usepackage{graphicx}
\usepackage{epstopdf}
\usepackage{algorithmic}
\usepackage{amsmath}
\usepackage{amssymb}
\usepackage{amsfonts}
\usepackage{dsfont}
\ifpdf
  \DeclareGraphicsExtensions{.eps,.pdf,.png,.jpg}
\else
  \DeclareGraphicsExtensions{.eps}
\fi

\usepackage{subcaption}

\newcommand{\ga}{\alpha}
\newcommand{\gb}{\beta}

\newcommand{\gep}{\epsilon}


\newcommand{\gl}{\lambda}
\newcommand{\go}{\omega}
\newcommand{\gs}{\sigma}

\newcommand{\gO}{\Omega}


\newcommand{\cB}{\mathcal{B}}

\newcommand{\cE}{\mathcal{E}}
\newcommand{\cF}{\mathcal{F}}


\newcommand{\N}{\mathbb{N}}

\newcommand{\R}{\mathbb{R}}

\newcommand{\OAP}{$(\Omega,\mathcal{A},P)$}

\newcommand{\Ind}{\mathds{1}}

\newcommand{\lanO}{\mathrm{O}}

\newcommand{\BV}{\operatorname{BV}}
\newcommand{\TV}{\operatorname{TV}}

\newcommand{\BIGOP}[1]{\mathop{\mathchoice%
{\raise-0.22em\hbox{\huge $#1$}}%
{\raise-0.05em\hbox{\Large $#1$}}{\hbox{\large $#1$}}{#1}}}

\newcommand{\BIGboxplus}{\mathop{\mathchoice%
{\raise-0.35em\hbox{\huge $\boxplus$}}%
{\raise-0.15em\hbox{\Large $\boxplus$}}{\hbox{\large $\boxplus$}}{\boxplus}}}


\newsiamremark{remark}{Remark}
\newsiamremark{hypothesis}{Hypothesis}
\crefname{hypothesis}{Hypothesis}{Hypotheses}
\newsiamthm{claim}{Claim}

\headers{PDMP driven by scalar conservation laws}{S.\ Knapp}

\title{Piecewise deterministic Markov processes driven by scalar conservation laws\thanks{Submitted on January 14, 2019.
\funding{Financially supported by the BMBF project ENets (05M18VMA) and DAAD-PPP USA (Project-ID 57444394).}}}

\author{Stephan Knapp\thanks{Department of Mathematics, University of Mannheim, Mannheim, Germany 
  (\email{stknapp@mail.uni-mannheim.de}).}
  }

\usepackage{amsopn}


\ifpdf
\hypersetup{
  pdftitle={Piecewise deterministic dynamics driven by scalar conservation laws},
  pdfauthor={S.\ Knapp}
}
\fi


\externaldocument{ex_supplement}


\begin{document}

\maketitle

\begin{abstract}
We investigate piecewise deterministic Markov processes (PDMP), where the deterministic dynamics follows a scalar conservation law and random jumps in the system are characterized by changes in the flux function. We show under which assumptions we can guarantee the existence of a PDMP and conclude bounded variation estimates for sample paths. Finally, we apply this dynamics to a production and traffic model and use this framework to incorporate the well-known scattering of flux functions observed in data sets.
\end{abstract}

\begin{keywords}
scalar conservation laws, piecewise deterministic Markov processes, production, LWR
\end{keywords}

\begin{AMS}
  60J25, 35L65
\end{AMS}
\section{Introduction}
The simplicity of scalar conservation laws allows to understand general behaviors of underlying models but, on the other hand, they are based on qualified assumptions as for example steady state or expected values. One possibility to widen this class of models are systems of conservation laws, where fluctuations and higher order moments can be governed. Another possibility to extend scalar conservation laws are stochastic effects. More precisely, starting from deterministic scalar conservation laws and a corresponding initial value problem (IVP)
\begin{align}
u_t(x,t)+f(u(x,t))_x = 0, \quad u(x,0) = u_0(x),\label{eq:IVPDet}
\end{align}
a natural extension is the incorporation of uncertainties. There already exist extensions based on a reformulation as stochastic differential equation like in \cite{Holden1997} and partial stochastic differential equation as in \cite{Feng2008,Lions2014} in the literature. Also uncertain initial data as for example in \cite{Fjordholm2017} and random chosen flux functions  \cite{Mishra2016} have been considered. In the latter work, the flux function is random and does not change randomly in time.

In contrast to  \cite{Mishra2016}, our goal is a stochastic process, which ``chooses'' a new flux function at random times, where these times and the random choice of the next flux function may dependent on the actual solution of the whole system. This can be easily motivated by, e.g.\ production models with machine failures \cite{GoettlichKnapp2017,GoettlichKnapp2018}, and also opinion formation, change of state (gas to liquid or vice versa) are reasonable applications.
This idea directly transfers us into the theory of piecewise deterministic Markov processes, see \cite{Jacobsen2006}.
In detail, given a parametrized family of Lipschitz continuous flux functions $f^\ga \in C^{0,1}(\R)$ for $\ga \in I \subset \R$, we are interested in a ``solution'' to
\begin{align}
u_t(x,t)+f^{\alpha(t)}(u(x,t))_x = 0, \quad u(x,0) = u_0(x),\label{eq:IVPStoch}
\end{align}
where $\ga(t) \in I$ denotes the current and random chosen flux function at time $t \in [0,T]$ and $x \in \R$.

We define how \eqref{eq:IVPStoch} has to be understood and how $\ga(t)$ is specified in the subsequent section \ref{sec:ModEq}. This section is followed by applications and numerical results in the case of a production and traffic model in section \ref{sec:Applications}.

\section{Modeling Equations}\label{sec:ModEq}
Let $u \colon \R \to \R$ be a function, then we denote by $\TV(u)$ its total variation and define $\BV(\R) = \{u \colon \R \to \R\colon \TV(u)<\infty\}$ as the set of all functions from $\R$ to $\R$ with total bounded variation, see, e.g.\ \cite{Holden2015}.
With this notation, it is well known as a result of Krusckov, see \cite{Holden2015}, that the IVP \eqref{eq:IVPDet} has a unique weak entropy solution if $u_0 \in \BV(\R)\cap L^1(\R)$ and if $f \in C^{0,1}(\R)$, i.e., is Lipschitz continuous. Furthermore, the solution $u$ satisfies
\begin{align}
\|u(\cdot,t)\|_\infty &\leq \|u_0\|_\infty,\\
\TV(u(\cdot,t))&\leq \TV(u_0), \\
\|u(\cdot,t)-u(\cdot,s)\|_{L^1} &\leq \|f\|_{C^{0,1}} \TV(u_0)|t-s|
\end{align}
and is $L^1$ stable with respect to initial data
\begin{align}
\|u(\cdot,t)-v(\cdot,t)\|_{L^1} \leq \|u_0-v_0\|_{L^1}.
\end{align}

\subsection*{Deterministic dynamics between jump times}
In this section, we define the dynamics to \eqref{eq:IVPStoch} based on theory of PDMPs. Unfortunately, we cannot apply the theory of PDMPs directly on solutions to \eqref{eq:IVPDet} with corresponding flux functions $f^\ga$ since $\BV(\R)$ is not separable and hence no Borel space. Following \cite{Bressan2000}, we use the extended solution operator to \eqref{eq:IVPDet} on $L^1(\R)$ and denote it by $S_t^\ga \colon L^1(\R) \to L^1(\R)$, where $\ga$ indicates that flux function $f^\ga$ is used.
We directly deduce the following properties of the family $(S^\ga_t,t\in [0,T])$ for every $\ga \in I$:
\begin{align}
S^\ga_{s+t} &= S^\ga_sS^\ga_t = S^\ga_t S^\ga_s \text{ for } s,t \in [0,T] \text{ with } s+t \in [0,T],\label{eq:S_Semigroup}\\
S^\ga_0 &= Id,\label{eq:S_Identity}\\
t \mapsto S^\ga_t &\in C([0,T];L^1(\R)),\label{eq:S_Continuous}\\
\|S_t^\ga u-S_t^\ga v\|_{L^1} &\leq \|u-v\|_{L^1},\label{eq:S_L1Contraction}\\
t \mapsto S^\ga_tu_0 \text{ is the } &\text{unique entropy solution to \eqref{eq:IVPDet} if } u_0 \in L^1(\R)\cap \BV(\R).
\end{align}

Up to now, we have no specification of $\ga(t)$. We define the state space $E = L^1(\R)\times I$ equipped with the Borel $\sigma$-algebra $\cE$ generated by the open sets induced by $\|(u,\ga)\| = \|u\|_{L^1}+|\ga|$ for $(u,\ga) \in E$. Then $(E,\cE)$ is a Borel space.

Our aim is to switch the flux function only at random times, which results in deterministic dynamics between the jumps in the form of
\begin{align*}
\phi_t \colon E \to E, \quad
\begin{pmatrix}
u\\
\ga
\end{pmatrix}
 \mapsto \begin{pmatrix}
S^\ga_tu,\\
\ga
\end{pmatrix}
.
\end{align*} 
Properties \eqref{eq:S_Semigroup}-\eqref{eq:S_L1Contraction} of $S$ directly translate to $\phi$. If we can show that $\phi \colon [0,T]\times E \to E$ is measurable, the dynamics $\phi$ is a candidate for deterministic dynamics in between jump times of a PDMP, see \cite{Jacobsen2006}. The following lemma \ref{lem:PhiContinuous} tells us a sufficient condition to prove measurability of $\phi$.

\begin{lemma}\label{lem:PhiContinuous}
Let the mapping $\ga \mapsto f^\ga$ from $I \to C^{0,1}(\R)$ be continuous with $I \subset \R$ an interval, then $(t,u,\ga) \mapsto (S^\ga_t u,\ga)$ is continuous from $[0,T]\times L^1(\R) \times I \to L^1(\R)\times I$ and consequently measurable.
\end{lemma}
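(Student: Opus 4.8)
The plan is to prove continuity of the map $(t,u,\ga) \mapsto (S^\ga_t u, \ga)$ by decomposing the total displacement into three pieces and controlling each separately. Since the second component $\ga$ is trivially continuous, everything reduces to showing that $(t,u,\ga)\mapsto S^\ga_t u$ is continuous from $[0,T]\times L^1(\R)\times I$ into $L^1(\R)$. To do this I would fix a target point $(t_0,u_0,\ga_0)$ and estimate $\|S^\ga_t u - S^{\ga_0}_{t_0}u_0\|_{L^1}$ by inserting intermediate terms and applying the triangle inequality. A natural splitting is
\begin{align*}
\|S^\ga_t u - S^{\ga_0}_{t_0}u_0\|_{L^1}
&\leq \|S^\ga_t u - S^\ga_t u_0\|_{L^1}
+ \|S^\ga_t u_0 - S^{\ga_0}_t u_0\|_{L^1}
+ \|S^{\ga_0}_t u_0 - S^{\ga_0}_{t_0}u_0\|_{L^1}.
\end{align*}

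The first and third terms are handled by the properties already recorded for the semigroup. The first term is bounded by $\|u-u_0\|_{L^1}$ using the $L^1$-contraction property \eqref{eq:S_L1Contraction}, so it vanishes as $u\to u_0$. The third term is the continuity of $t\mapsto S^{\ga_0}_t u_0$ in $L^1(\R)$, which is exactly property \eqref{eq:S_Continuous}; hence it vanishes as $t\to t_0$. The genuine work lies in the middle term, which measures how the solution operator changes when only the flux function is varied from $\ga$ to $\ga_0$ while the initial datum $u_0$ and the time $t$ are held fixed.

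The main obstacle is therefore a \emph{continuous dependence on the flux} estimate: I must show that $\|S^\ga_t u_0 - S^{\ga_0}_t u_0\|_{L^1}\to 0$ as $\ga\to\ga_0$, uniformly (or at least controllably) in $t\in[0,T]$. The standard tool here is a stability result of Lucier / Bressan type bounding the $L^1$ difference of two entropy solutions with the same initial datum but different fluxes by something like $t\cdot \TV(u_0)\cdot \|f^\ga - f^{\ga_0}\|_{C^{0,1}}$ (or the Lipschitz constant of $f^\ga-f^{\ga_0}$). Such an estimate applies directly when $u_0\in \BV(\R)\cap L^1(\R)$, and the hypothesis that $\ga\mapsto f^\ga$ is continuous into $C^{0,1}(\R)$ forces $\|f^\ga - f^{\ga_0}\|_{C^{0,1}}\to 0$, giving the conclusion on the dense subset $\BV(\R)\cap L^1(\R)$.

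To pass from the dense subset to all of $L^1(\R)$ I would use an approximation argument combined with the uniform contraction \eqref{eq:S_L1Contraction}: given an arbitrary $u_0\in L^1(\R)$ and $\gep>0$, choose $w\in \BV(\R)\cap L^1(\R)$ with $\|u_0-w\|_{L^1}<\gep$, then write
\begin{align*}
\|S^\ga_t u_0 - S^{\ga_0}_t u_0\|_{L^1}
\leq \|S^\ga_t u_0 - S^\ga_t w\|_{L^1}
+ \|S^\ga_t w - S^{\ga_0}_t w\|_{L^1}
+ \|S^{\ga_0}_t w - S^{\ga_0}_t u_0\|_{L^1},
\end{align*}
where the outer two terms are each at most $\gep$ by contraction and the middle term is controlled by the $\BV$ flux-stability estimate. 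Collecting the three pieces of the original splitting and letting $(t,u,\ga)\to(t_0,u_0,\ga_0)$ yields continuity; measurability then follows immediately since every continuous map between metric (hence Borel) spaces is Borel measurable. The delicate point to state carefully is the flux-stability estimate, so I would cite the relevant result from \cite{Bressan2000} or \cite{Holden2015} rather than reproving it.
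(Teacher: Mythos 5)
Your proposal is correct and follows essentially the same route as the paper: a triangle-inequality splitting into time, initial-data, and flux contributions, with the data term handled by the $L^1$-contraction \eqref{eq:S_L1Contraction}, the time term by \eqref{eq:S_Continuous}, and the flux term by approximating in $\BV(\R)\cap L^1(\R)$ and invoking the Lucier/Holden--Risebro stability bound $t\,\TV(w)\,\|f^\ga-f^{\ga_0}\|_{C^{0,1}}$ together with contraction on the outer terms. The only (cosmetic, if anything slightly cleaner) difference is that you anchor the time-continuity term at the fixed target point $(\ga_0,u_0)$, whereas the paper's splitting leaves $\ga$ and $u$ from the moving point in that term.
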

\begin{proof}
Let $(s,u,\ga),(t,v,\gb) \in [0,T]\times L^1(\R) \times I$, then we use the norm
\begin{align*}
\|(s,u,\ga)-(t,v,\gb)\| = |s-t|+\|u-v\|_{L^1}+|\ga-\gb|.
\end{align*}
According to this norm, we use
\begin{align*}
\|(S_s^\ga u,\ga)-(S_t^\gb v,\gb)\| = \|S_s^\ga u-S_t^\beta v\|_{L^1}+|\ga-\gb|.
\end{align*}
To show continuity, we estimate $\|S_s^\ga u-S_t^\beta v\|_{L^1}$ as follows:
\begin{align*}
\|S_s^\ga u-S_t^\beta v\|_{L^1} & \leq \|S_s^\ga u-S_t^\ga u\|_{L^1}+\|S_t^\ga u-S_t^\ga v\|_{L^1}+\|S_t^\ga v-S_t^\gb v\|_{L^1}
\end{align*}
and conclude that we can make $\|S_s^\ga u-S_t^\ga u\|_{L^1}$ and $\|S_t^\ga u-S_t^\ga v\|_{L^1}$ sufficiently small by shrinking $\|(S_s^\ga u,\ga)-(S_t^\gb v,\gb)\|$ due to properties \eqref{eq:S_Continuous}-\eqref{eq:S_L1Contraction}.
Let $(v_n, n \in \N)$ be a sequence in $\BV(\R)\cap L^1(\R)$ satisfying $\lim_{n \to \infty} \|v_n-v\|_{L^1} = 0$. We estimate $\|S_t^\ga v-S_t^\gb v\|_{L^1}$ as follows:
\begin{align*}
\|S_t^\ga v-S_t^\gb v\|_{L^1} & \leq \|S_t^\ga v-S_t^\ga v_n\|_{L^1}+\|S_t^\gb v_n-S_t^\gb v\|_{L^1}+\|S_t^\ga v_n-S_t^\gb v_n\|_{L^1}\\
&\leq 2\|v-v_n\|_{L^1}+\|S_t^\ga v_n-S_t^\gb v_n\|_{L^1}\\
&\leq 2\|v-v_n\|_{L^1} + t \|f^\ga-f^\gb\|_{C^{0,1}} \TV(v_n),
\end{align*}
where we used the result from \cite[p.\ 53]{HoldenRisebro} in the last estimate.
Altogether, we find that
\begin{align*}
\|S_s^\ga u-S_t^\beta v\|_{L^1} & \leq \|S_s^\ga u-S_t^\ga u\|_{L^1}+\| u- v\|_{L^1}\\
&\quad +2\|v-v_n\|_{L^1} + T \|f^\ga-f^\gb\|_{C^{0,1}} \TV(v_n).
\end{align*}
Now, let $(t,v,\gb) \in [0,T]\times L^1(\R) \times I$, $\gep>0$ and choose $n \in \N$ such that $\|v-v_n\|_{L^1} < \frac{\gep}{6}$ as well as $\delta>0$ such that
\begin{align*}
\|S_s^\ga u-S_t^\ga u\|_{L^1} < \frac{\gep}{6}, \quad \| u- v\|_{L^1} <\frac{\gep}{6},\quad \|f^\ga-f^\gb\|_{C^{0,1}} < \frac{\gep}{\TV(v_n) 6 T},\quad |\ga-\gb| < \frac{\gep}{6}
\end{align*}
implying $$\|(S_s^\ga u,\ga)-(S_t^\gb v,\gb)\| < \gep$$
for all $(s,u,\ga) \in [0,T]\times L^1(\R) \times I$ satisfying $\|(s,u,\ga)-(t,v,\gb)\| < \delta$.
\end{proof}

One simple example for a family of flux functions, which satisfies the continuity with respect to the parameter $\ga \in I$ is given by 
$
f^\ga = \ga f
$
for $f \in C^{0,1}(\R)$. Then $\|f^\ga-f^\gb\|_{C^{0,1}} = \|f\|_{C^{0,1}}|\ga-\gb|$.

\subsection*{Jump and jump time distributions}
Following \cite{Jacobsen2006}, we specify the transition intensities $q_t(y,B) \geq 0$, i.e., the rate to jump from $y \in E$ in a state in $B \in \cE$ at time $t \in [0,T]$. This can be decomposed into $q_t(y,B) = \eta_t(y,B)\psi_t(y)$, where $\psi_t(y)$ is the total intensity that a jump occurs a time $t$ and $\eta_t(y,B)$ is the probability of a jump from $y$ into a state in $B$ provided a jump occurs at time $t$.

In order to use these intensities, we assume $(y,t) \mapsto \psi_t(y)$ to be measurable and for all $(y,t)$ we need $\int_t^{t+h}\psi_s(y)ds<\infty$ for $h = h(y,t)$ sufficiently small.
For all $t$ we additionally assume that $\eta_t$ is a Marovian kernel, see, e.g.\ \cite{BauerWTengl}, for a definition.
A further and natural assumption is that $\eta_t(y,\{y\}) = 0$ holds for all $(y,t) \in E\times[0,T]$.

At this point almost everything can happen at jump times but we fix the specific idea that the flux function only changes at the jump times. In detail, there is no jump in the solution of the conservation law component to inherit mass conservation again.
To do so, we restrict on rates 
\begin{align*}
\gl \colon I\times \cB(I)\times [0,T] \times L^1(\R) \to \R_{> 0},
\end{align*}
satisfying
\begin{enumerate}
\item $\sup\{\gl(\ga,I,t,u)\colon \ga \in I,\; t \in [0,T],\; u \in L^1(\R)\}\leq  \gl^{\text{max}}<\infty$,
\item for every $t \in [0,T]$, $(\ga,u) \in E$ the mapping $B \mapsto \gl(\ga,B,t,u)$ is a measure,
\item for every $t \in [0,T]$, $B \in \cB(I)$ the mapping $(\ga,u) \mapsto \gl(\ga,B,t,u)$ is measurable,
\item for every $t \in [0,T]$, $(\ga,u) \in E$ we have $\gl(\ga,\{\ga\},t,u) = 0$.
\end{enumerate}
Then we define for every $y = (\ga,u) \in E$ and $B \in \cE$ the total intensity and jump distribution by
\begin{align*}
\psi_t(y) &= \gl(\ga,I,t,u),\\
\eta_t(y,B) &= \frac{1}{\gl(\ga,I,t,u)} \int_I \Ind_B((\gb,u)) \gl(\ga,d\gb,t,u).
\end{align*}

\subsection*{Existence}
Due to the uniform bound on $\psi_t$, we can use a so-called thinning algorithm to build the jump times $T_n$ and after jump locations $Y_n$ for $n \in \N_0$ iteratively, see \cite{GoettlichKnapp2018,Lemaire2017}. Since the number of jumps is finite $P$-almost surely, again due to the uniform bound on the rates, we obtain a stable random counting measure and theorem 7.3.1 from \cite{Jacobsen2006} can be applied. We obtain the following result
\begin{theorem}\label{thm:Existence}
For every initial data $x_0 = (\ga_0,u_0) \in E$ there exists a stochastic process $X=(X(t), t\in [0,T])$ on some probability space \OAP, which satisfies
\begin{enumerate}
\item $X(0) = x_0$,
\item $X$ is a Markov process with respect to its natural filtration $\cF^X = (\cF^X_t,t \in [0,T])$ given by $\cF^X_t = \sigma(X(s),0\leq s\leq t)$,
\item $X$ is piecewise deterministic and piecewise continuous, i.e., there exist jump times $T_n \in [0,T]$ and post jump locations $Y_n \in E$ for $n \in \N_0$ such that $$X(t) = \phi_{t-T_n}(Y_n) \quad \Leftrightarrow \quad t \in [T_n,T_{n+1}),$$
where for convenience $T_0 = 0$ and $Y_0 = x_0$.
\end{enumerate}
\end{theorem}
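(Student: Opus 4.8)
The plan is to verify that the data assembled above---the Borel state space $(E,\cE)$, the deterministic flow $\phi$, and the transition intensities $(\psi,\eta)$---fulfil exactly the standing hypotheses of the abstract PDMP construction theorem, and then to read off properties 1--3 from its conclusion. The state space $(E,\cE)$ is a Borel space as established, and the flow $\phi\colon[0,T]\times E\to E$ is measurable by Lemma \ref{lem:PhiContinuous}; together with the semigroup and identity structure \eqref{eq:S_Semigroup}--\eqref{eq:S_Identity} inherited from $S$, this supplies the deterministic skeleton required. The intensity data $\psi_t(y)=\gl(\ga,I,t,u)$ and $\eta_t(y,\cdot)$ meet the measurability and kernel conditions: $(y,t)\mapsto\psi_t(y)$ is measurable by the standing assumption on $\gl$, each $\eta_t$ is a Markovian kernel, and $\eta_t(y,\{y\})=0$ follows from assumption (4), reflecting that the conservation-law component is unchanged at a jump.

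First I would make precise the iterative (thinning) construction of the jump times and post-jump locations. Given $(T_n,Y_n)$, the next holding time has survival function $s\mapsto\exp(-\int_0^s\psi_{T_n+r}(\phi_r(Y_n))\,dr)$, and, conditional on a jump at $T_{n+1}$, the post-jump location is drawn from $\eta_{T_{n+1}}(\phi_{T_{n+1}-T_n}(Y_n),\cdot)$. Since $\psi$ is uniformly bounded by $\gl^{\text{max}}$, this is realized concretely by thinning a homogeneous Poisson process of rate $\gl^{\text{max}}$, accepting a candidate point at time $t$ with probability $\psi_t(\phi_{\cdot}(Y_n))/\gl^{\text{max}}\le1$. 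This makes each $(T_{n+1},Y_{n+1})$ a measurable function of $(T_n,Y_n)$ and the driving randomness, so the sequence $((T_n,Y_n))_{n\in\N_0}$ is well defined on a single probability space \OAP.

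The key step is non-explosion: only finitely many jumps occur in $[0,T]$ almost surely. Here the uniform bound is decisive, because $\psi_t(y)\le\gl^{\text{max}}$ for all $(y,t)$ forces the jump-counting process to be stochastically dominated by a homogeneous Poisson process of rate $\gl^{\text{max}}$; hence the number of jumps in $[0,T]$ is bounded by a $\mathrm{Poisson}(\gl^{\text{max}}T)$ variable and is finite $P$-almost surely. This is exactly the stability (non-explosiveness) of the random counting measure under which \cite[Theorem 7.3.1]{Jacobsen2006} yields a genuine PDMP rather than one accumulating jumps. Setting $X(t)=\phi_{t-T_n}(Y_n)$ for $t\in[T_n,T_{n+1})$ and invoking that theorem then delivers the start value $X(0)=x_0$, the Markov property with respect to $\cF^X$, and the claimed piecewise deterministic representation.

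The main obstacle I anticipate is not the probabilistic bookkeeping but the measurability needed to even write the survival integral: one must check that $(s,y)\mapsto\psi_s(\phi_s(y))$ is jointly measurable, so that $\int_0^s\psi_{T_n+r}(\phi_r(Y_n))\,dr$ is meaningful and the thinning construction is measurable. This combines Lemma \ref{lem:PhiContinuous} with the measurability assumption on $\gl$, and it is the only place where the infinite-dimensional $L^1(\R)$-component of $E$ could cause difficulty; it is handled precisely because $\phi$ is \emph{continuous} in all its arguments, so composition with the measurable intensity is again measurable.
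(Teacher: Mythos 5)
Your proposal is correct and takes essentially the same route as the paper: the paper's own (very brief) argument is precisely the thinning construction of the pairs $(T_n,Y_n)$ enabled by the uniform bound $\gl^{\text{max}}$, the resulting $P$-almost-sure finiteness of the number of jumps (stability of the random counting measure), and an appeal to \cite[Theorem 7.3.1]{Jacobsen2006}. You merely supply more detail than the paper does---the explicit survival function, the stochastic domination by a homogeneous Poisson process of rate $\gl^{\text{max}}$, and the joint measurability of $(s,y)\mapsto\psi_s(\phi_s(y))$ via Lemma \ref{lem:PhiContinuous}---all of which is implicit in the paper's citation of the thinning algorithm.
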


\subsection*{Total Variation bounds and BV solutions}
The extension of the solution to $L^1$ allowed us to use classical results from the theory of piecewise deterministic Markov processes to obtain the existence of a stochastic process, which satisfies our requirements. 
We expect that if the initial condition $u_0 \in L^1(\R)\cap \BV(\R)$, then we deduce $u(t) \in  L^1(\R)\cap \BV(\R)$
again as the following lemma shows.
\begin{lemma}\label{lem:BV}
Let $X = (X(t),t \in [0,T])$ be the stochastic process from theorem \ref{thm:Existence} with $X(t) = (\ga(t),u(t)) \in E$. If $u(0) = u_0 \in L^1(\R)\cap \BV(\R)$, then $u(t) \in L^1(\R)\cap \BV(\R)$ and $\TV(u(t)) \leq \TV(u_0)$. 
\end{lemma}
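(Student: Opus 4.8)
The plan is to argue pathwise, exploiting the piecewise-deterministic structure of Theorem~\ref{thm:Existence} together with the deterministic total variation estimate $\TV(u(\cdot,t)) \le \TV(u_0)$ recalled in Section~\ref{sec:ModEq}. Fix a sample $\omega$ outside the $P$-null set on which infinitely many jumps occur, so that there are finitely many jump times $0 = T_0 < T_1 < \dots$ in $[0,T]$. On each half-open interval $[T_n,T_{n+1})$ the process is $X(t) = \phi_{t-T_n}(Y_n)$, so its $u$-component is exactly $u(t) = S^{\ga_n}_{t-T_n} u(T_n)$, where $\ga_n$ is the flux parameter active on that interval. Because the jump kernel $\eta_t$ changes only the parameter coordinate and leaves the $u$-coordinate untouched (this is the mass-conservation requirement built into the rates $\gl$), the $u$-component is continuous in $t$ across the jump times.

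First I would establish the base case on $[0,T_1)$. Since $u(0) = u_0 \in L^1(\R)\cap\BV(\R)$, the map $t \mapsto S^{\ga_0}_t u_0$ is the entropy solution of \eqref{eq:IVPDet}, and the Kruzkov estimate gives $u(t) \in L^1(\R)\cap\BV(\R)$ with $\TV(u(t)) \le \TV(u_0)$ for all $t \in [0,T_1)$. Letting $t \uparrow T_1$ and using the $L^1$-continuity \eqref{eq:S_Continuous} together with the fact that the $u$-component does not jump, I obtain $u(T_1) = S^{\ga_0}_{T_1} u_0 \in L^1(\R)\cap\BV(\R)$ with $\TV(u(T_1)) \le \TV(u_0)$.

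Then I would run the induction over the jump intervals. Assume $u(T_n) \in L^1(\R)\cap\BV(\R)$ with $\TV(u(T_n)) \le \TV(u_0)$. On $[T_n,T_{n+1})$ the $u$-component equals $S^{\ga_n}_{t-T_n} u(T_n)$, and applying the deterministic estimate with initial datum $u(T_n)$ yields $\TV(u(t)) = \TV(S^{\ga_n}_{t-T_n} u(T_n)) \le \TV(u(T_n)) \le \TV(u_0)$, together with membership in $L^1(\R)\cap\BV(\R)$ throughout the interval. Since $u$ is continuous at $T_{n+1}$, the bound carries over to $u(T_{n+1})$ exactly as in the base case. As there are only finitely many intervals, these steps cover all of $[0,T]$ and produce $\TV(u(t)) \le \TV(u_0)$ for every $t \in [0,T]$, $P$-almost surely.

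The only point requiring genuine care is the behaviour at the jump times: chaining the deterministic estimate across jumps works precisely because a jump alters only $\ga$ and preserves the $u$-component, so that $u(T_{n+1})$ stays in $\BV(\R)\cap L^1(\R)$ and inherits the total variation bound, which is what allows the next application of the Kruzkov estimate. The newly selected parameter $\ga_{n+1}$ enters the subsequent interval only through the choice of Lipschitz flux $f^{\ga_{n+1}}$, and the estimate $\TV(S^{\ga_{n+1}}_{s}\,\cdot\,) \le \TV(\,\cdot\,)$ holds uniformly in the flux; hence the bound propagates without loss and no further hypothesis on the jump kernel is needed beyond those already imposed.
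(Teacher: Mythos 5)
Your proof is correct and follows essentially the same route as the paper: a pathwise induction over the finitely many jump intervals, applying the deterministic Kruzkov total variation estimate $\TV(S^{\ga}_s\,\cdot\,)\leq \TV(\,\cdot\,)$ on each interval and using that jumps alter only the parameter $\ga$, so the $u$-component is continuous across jump times and the bound propagates. Your write-up is in fact somewhat more careful than the paper's (which compresses the induction into ``iteratively, we deduce''), but the underlying argument is identical.
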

\begin{proof}
Let $\go \in \gO$, $T_n(\go)$ the jump times and $Y_n(\go)$ the post jump locations of $X(\go)$ for $n \in \N_0$. 
For $t \in [0,T_1(\go))$ we have $\TV(u(t,\go)) = \TV(S^{\ga_0}_t u_0) \leq \TV(u_0)$ by classical results on scalar conservation laws, see, e.g.\ \cite{Holden2015}. 
At time $t = T_1$ the flux function changes and for $t \in [T_1(\go),T_2(\go))$ it follows 
$$\TV(u(t)) = \TV(S^{\ga(T_1(\go),\go)}_{t-T_1(\go)} u(t,\go)) \leq \TV(u(T_1(\go),\go)) \leq \TV(u_0)$$
by continuity of $t \mapsto u(t,\go)$. 
Iteratively, we deduce
\begin{align*}
\TV(u(t,\go)) \leq \TV(u_0).
\end{align*}
\end{proof}

\begin{remark}
Lemma \ref{lem:BV} is only valid because we have no jumps in the $u$ component at jump times by construction. Using the same arguments, the mass in the $u$ component is preserved.
\end{remark}

\section{Applications and numerical results}\label{sec:Applications}
Since we motivated PDMPs driven by scalar conservation law dynamics by the scattering of real data, we discuss simulation results of two examples in this section. The first example is a production and the second example is a traffic flow model.

\subsection*{Production model}
Macroscopic production models have been widely studied in the literature, see \cite{ApiceGoettlichHertyBenedetto} for an overview. Since in production capacity drops occur due to machine failures or human influences, deterministic models have been extended to stochastic production models, see \cite{DegondRinghofer2007, GoettlichKnapp2017,GoettlichMartinSickenberger,  GoettlichKnapp2018}. Therein, a random flux function in the form of $$f(\rho) = \min\{v\rho,\mu\}$$ has been chosen with a deterministic production velocity $v>0$, a stochastic capacity $\mu$ for a production density $\rho$. The latter corresponds to the variable $u$ in our context. In \cite{DegondRinghofer2007,GoettlichMartinSickenberger} the capacity $\mu$ is a Continuous Time Markov Chain, in \cite{GoettlichKnapp2017} a semi-Markov process and in \cite{GoettlichKnapp2018} a PDMP construction has been developed.

In contrast to the mentioned works, we consider a single production step instead of a network and use our more general setting that allows for further flux functions motivated by data sets, see e.g.\ \cite{Forestier2015}.
One possible choice is
\begin{align*}
f^\ga(\rho) = \mu(\ga) (1-e^{-\frac{v(\ga)}{\mu(\ga)}\rho})
\end{align*}
for a  continuous bounded capacity $\mu>0$ and velocity $v \geq 0$. Some calculation shows $\|f^\ga-f^\gb\|_{C^{0,1}} = \lanO(|v(\ga)-v(\gb)|+|\frac{v(\ga)}{\mu(\ga)}-\frac{v(\gb)}{\mu(\gb)}|)$ and the flux function fulfills the requirements to obtain the existence of a suitable stochastic process $X$, see theorem \ref{thm:Existence}.

In Figure \ref{fig:FluxFunctionsProduction1} flux functions for $\mu(\ga) = 1+\tanh(\frac{\ga}{2})$ and $v(\ga) = 1+\tanh(\ga)$ and different $\ga$ are drawn. So, we can capture different production velocities and capacities by varying $\ga$. 

\begin{figure}[htb!]
\begin{subfigure}[c]{0.5\textwidth}
\includegraphics[width=0.95\textwidth]{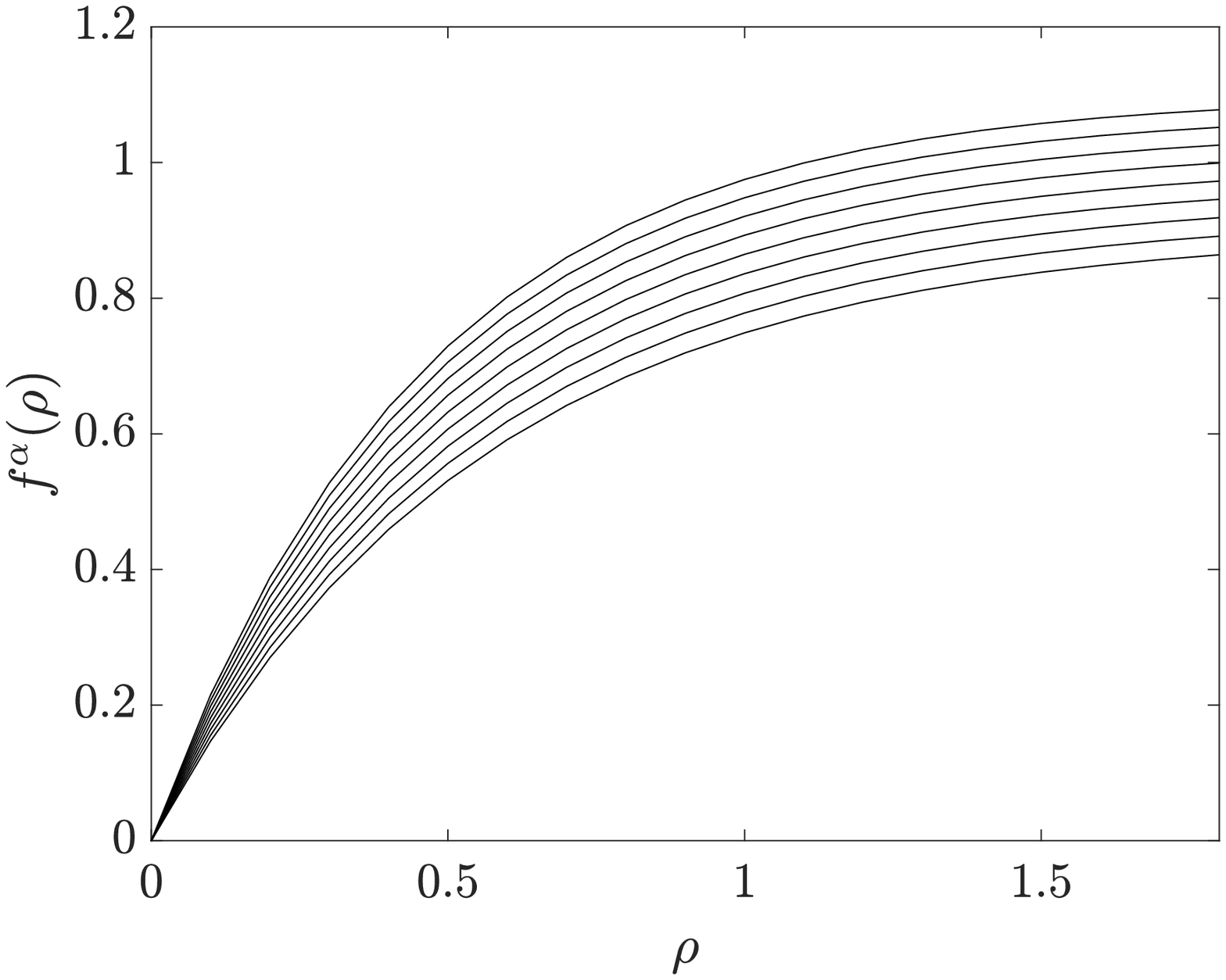}
\subcaption{Flux functions for different $\ga$}
\label{fig:FluxFunctionsProduction1}
\end{subfigure}
\begin{subfigure}[c]{0.5\textwidth}
\includegraphics[width=0.95\textwidth]{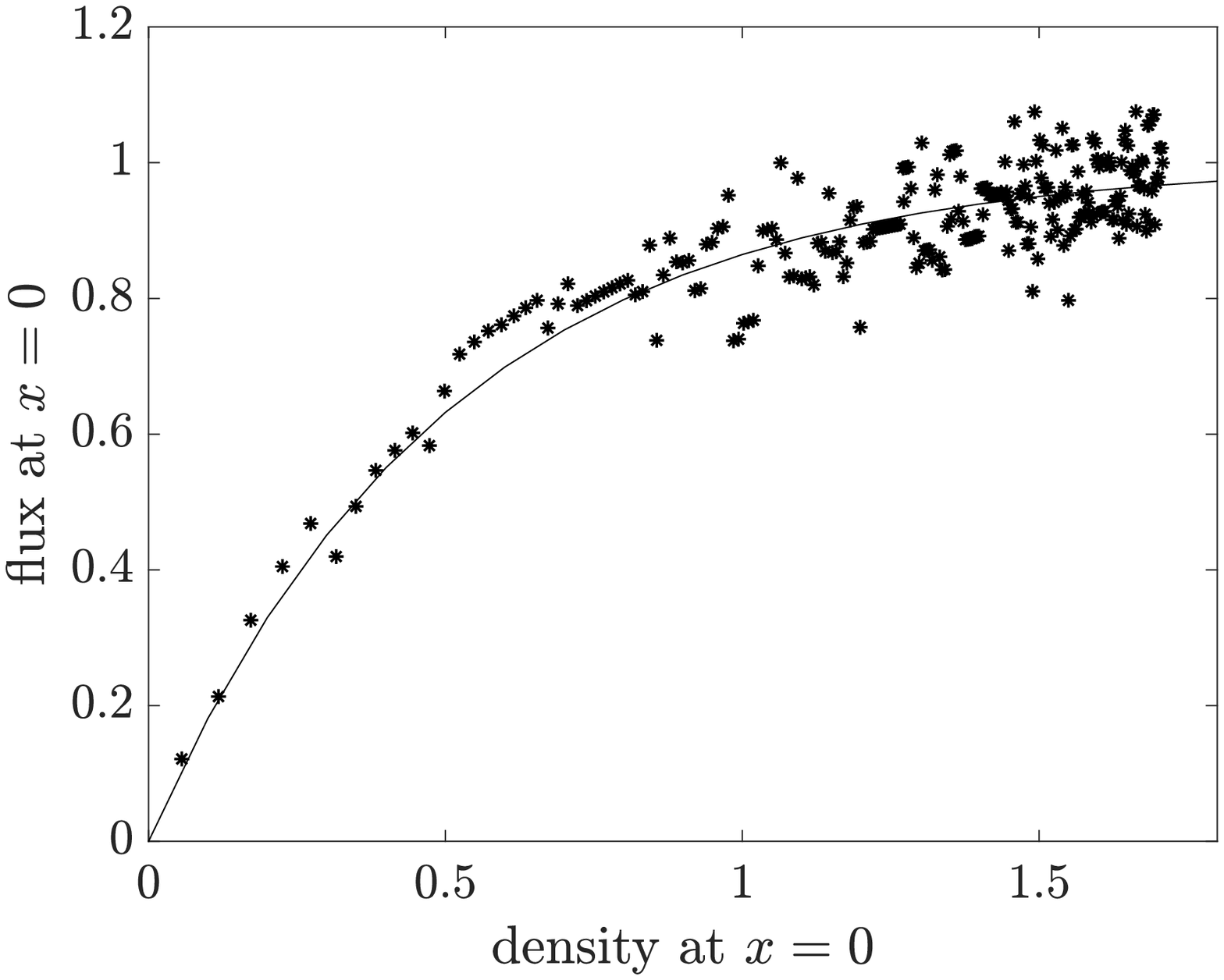}
\subcaption{Sample densities and fluxes at $x = 0$}
\label{fig:SampleDensFlux1}
\end{subfigure}
\end{figure}
It remains to introduce jump rates $\gl$ in the production setting. We want the total jump intensity to be dependent on the Work In Progress (WIP) on some interval $[a,b]\subset \R$, which is defined as
$
\text{WIP}(\rho(t)) = \int_a^b \rho(x,t) dx.
$
In detail, we assume as WIP increases, the probability of a change of the flux function increases and vice versa. The distribution of the post jump location is assumed to be symmetrical around $\bar{\ga}\in \R$ with variance $\gs^2>0$ and we exemplary use
\begin{align*}
\gl(\ga,B,t,\rho) = \bar{\gl}(\rho) \int_B \frac{1}{\sqrt{2 \pi \sigma^2(\rho)}} e^{-\frac{(z-\bar{\ga})^2}{2 \sigma^2(\rho)}}dz
\end{align*} 
for every $\ga \in \R$, $B \in \cB(\R)$, $t \in [0,T]$ and $\rho \in L^1(\R)$. One reasonable choice for $ \bar{\gl}(\rho) $ is $ \bar{\gl}(\rho)  = \gl_0 (1-e^{-\gl_1 \text{WIP}(\rho)})$ for some $\gl_0,\gl_1>0$. 
For the subsequent simulation results, we assume $a = 0$, $b = 1$, $\gl_0 = 5$, $\gl_1 = 1$, $\sigma^2 = 10^{-2}$, $\bar{\ga} = 0$. The time horizon is $T = 50$ and the numerical spatial domain is taken as large that boundary conditions have no influence at $x=0$ on the solution. The deterministic dynamics is approximated by a Godunov scheme and in figure \ref{fig:SampleDensFlux1} we see the result of one sample of the density flux relation at position $x = 0$ generated by the model with initial data 
$\rho(x,0) = \frac{3}{2}(\sin(x)+1)e^{-\frac{|x|}{100}}$. The black markers consider to the density and flux at times $t = 0,0.2,\dots,50$ and the black solid line in figure \ref{fig:SampleDensFlux1} represents the flux function for $\ga = 0$. We observe in this stochastic macroscopic production model the typical scattering effect like it is the case for microscopic production models driven by discrete event simulations in \cite{Forestier2015}.

\subsection*{Traffic flow model}
The scattering effect in the density flux diagram obtained by real data, see, e.g.\ \cite{Piccoli2012,Seibold2013}, is a fundamental pattern and important for the development of second order, stochastic and phase transition traffic flow models. In the so-called free phase we observe small fluctuations and an almost linearly increasing flux with respect to the density. At a critical density, the flux decreases in the so-called congested phase. The critical density and congested phase are characterized by higher variances, i.e.\ sacttering effects in data. There exist already stochastic approaches like in \cite{Li2012,Ni2018} and a comprehensive overview is given in \cite{Wang2011}. We will show that the framework, which we introduced in section \ref{sec:ModEq} is able to capture the scattering effects as well. 

As family of flux functions, we use, motivated by the shape of the probability density function of the Gamma distribution, 
\begin{align*}
f^\ga(\rho) = \frac{\theta-1}{\ga^\theta} \frac{1}{\Gamma(\frac{\theta-1}{\ga})}\rho^{\theta-1}e^{-\frac{\theta-1}{\ga}\rho}
\end{align*}
for some parameter $\theta\geq 1$, $\ga >0$, $\rho \geq 0$ and $\Gamma$ the Gamma function. If $\theta \geq 2$, we also have $f^\ga \in C^{0,1}(\R_{\geq 0})$ and the maximum is attained at $\rho^\ast = \alpha$.
In figure \ref{fig:FluxFunctionsLWR1}, we see the shape of the flux function by varying $\ga \in [0.3,0.5]$ and $\theta = 2.1$. We set
\begin{align*}
\gl(\ga,B,t,\rho) = \bar{\gl}(\ga,\rho) \int_B \frac{1}{2a(\ga,\rho)} \Ind_{[\ga_0-a(\ga,\rho),\ga_0+a(\ga,\rho)]}(z)dz
\end{align*} 
for every $\ga  >0$, $B \in \cB(\R_{>0})$, $t \in [0,T]$ and $\rho \in L^1(\R)$. Here, we choose $ \bar{\gl}(\ga,\rho)  = \gl_0 +(\gl_1-\gl_0)V(\ga,\rho)$ for $\gl_0 = 3$ as the minimal and $\gl_1 = 10$ as the maximal rate, $a(\ga,\rho) =\sqrt{ \frac{9}{2 \cdot 10^3}(V(\ga,\rho)+1)}$ with $V(\ga,\rho) = \int_0^1 \Ind_{\rho(x) \geq \ga} dx$ and $\ga_0 = 0.4$. The functional $V(\ga,\rho)$ describes the portion of $[0,1]$, which is above the actual critical density $\ga$ and always lies in between zero and one. 
To study the free phase, we use an initial condition in the form of $\rho_0(x) = (0.05+0.4 \max\{\sin(x),0\})e^{-\frac{|x|}{100}}$. A sample of the density flux relation at $x = 0$ as well as at $x = 1$ is shown in figure \ref{fig:SampleDensFluxLWRFree} given at the times $t = 0,0.1,\dots,50$. We observe a low scattering as expected. Contrary, in figure \ref{fig:SampleDensFluxLWRCongested} a sample with initial condition $\rho_0(x) = (0.4+ \max\{\sin(x),0\})e^{-\frac{|x|}{100}}$, i.e.\ congested case, is shown resulting in high scattering. Finally, in figure \ref{fig:SampleDensAndFluxLWRCongested} the time evolution of the density and flux at $x =0$ in the congested case is shown. The density is not severely affected by the variation in $\ga$ compared to the flux.

\begin{figure}[htb!]
\begin{subfigure}[c]{0.5\textwidth}
\includegraphics[width=0.95\textwidth]{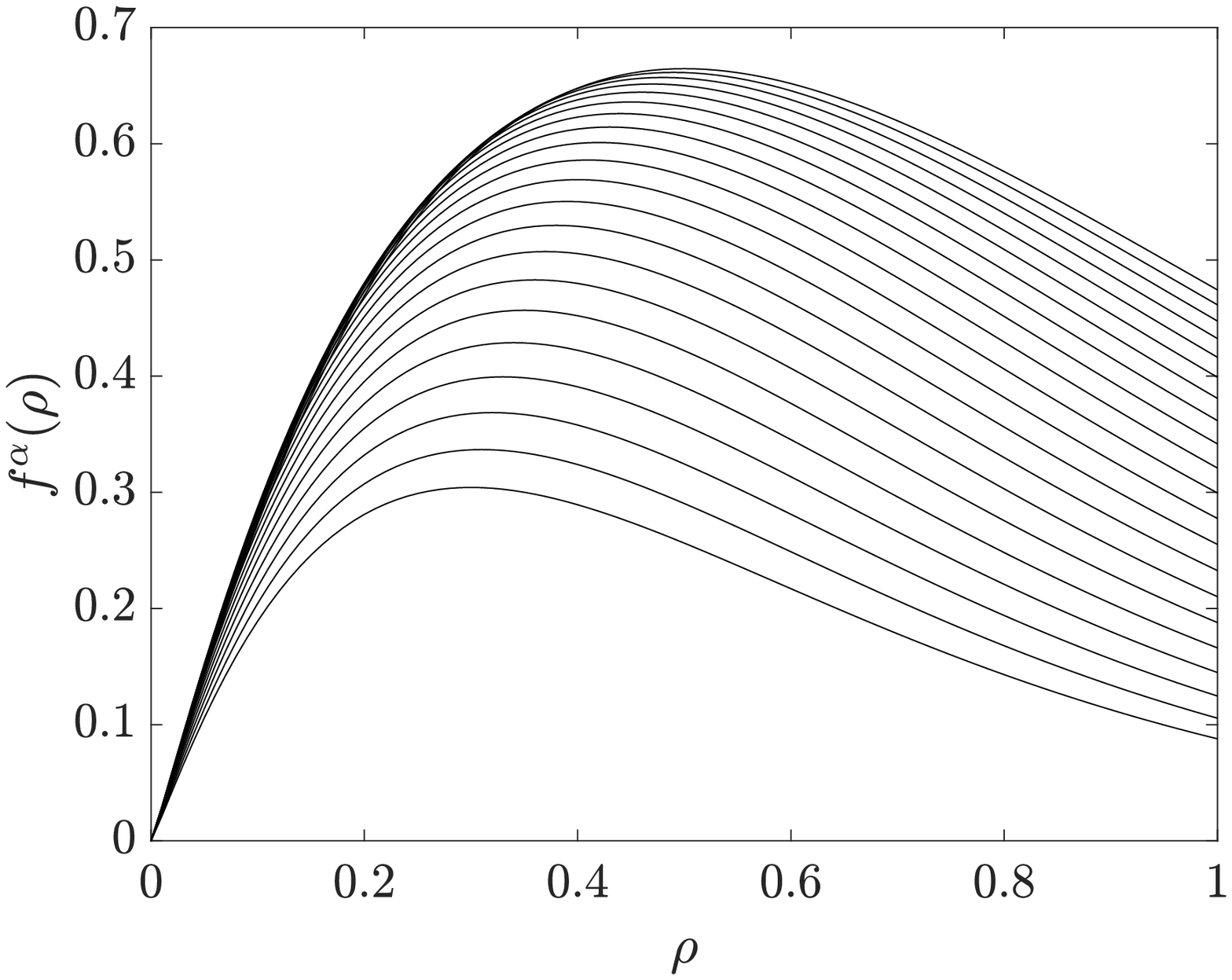}
\subcaption{Flux functions for different values\\of $\ga\in [0.3,0.5]$}
\label{fig:FluxFunctionsLWR1}
\end{subfigure}
\begin{subfigure}[c]{0.5\textwidth}
\includegraphics[width=0.95\textwidth]{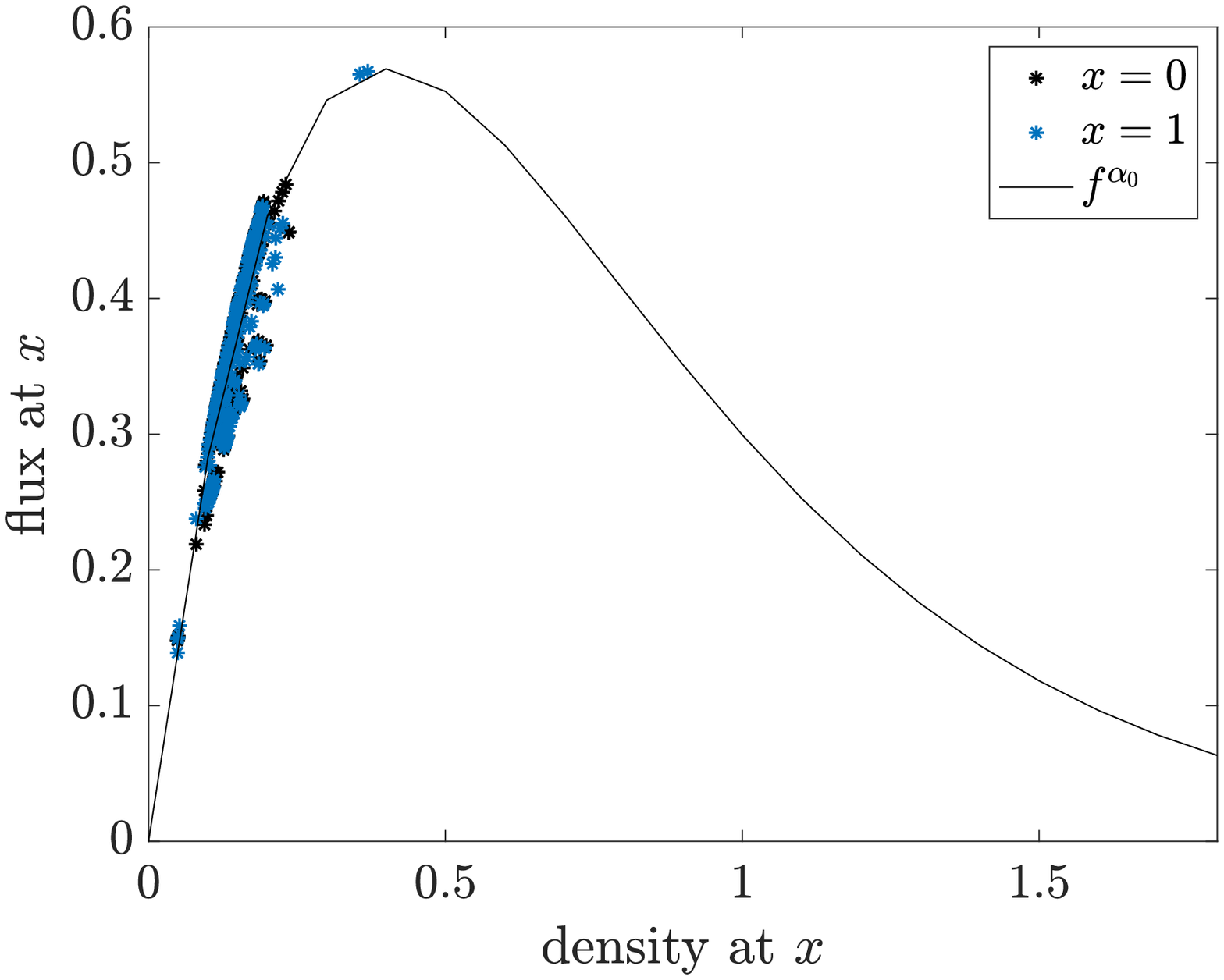}
\subcaption{Sample densities and fluxes at $x = 0$ \\and $x=1$ in free phase}
\label{fig:SampleDensFluxLWRFree}
\end{subfigure}\\
\begin{subfigure}[c]{0.5\textwidth}
\includegraphics[width=0.95\textwidth]{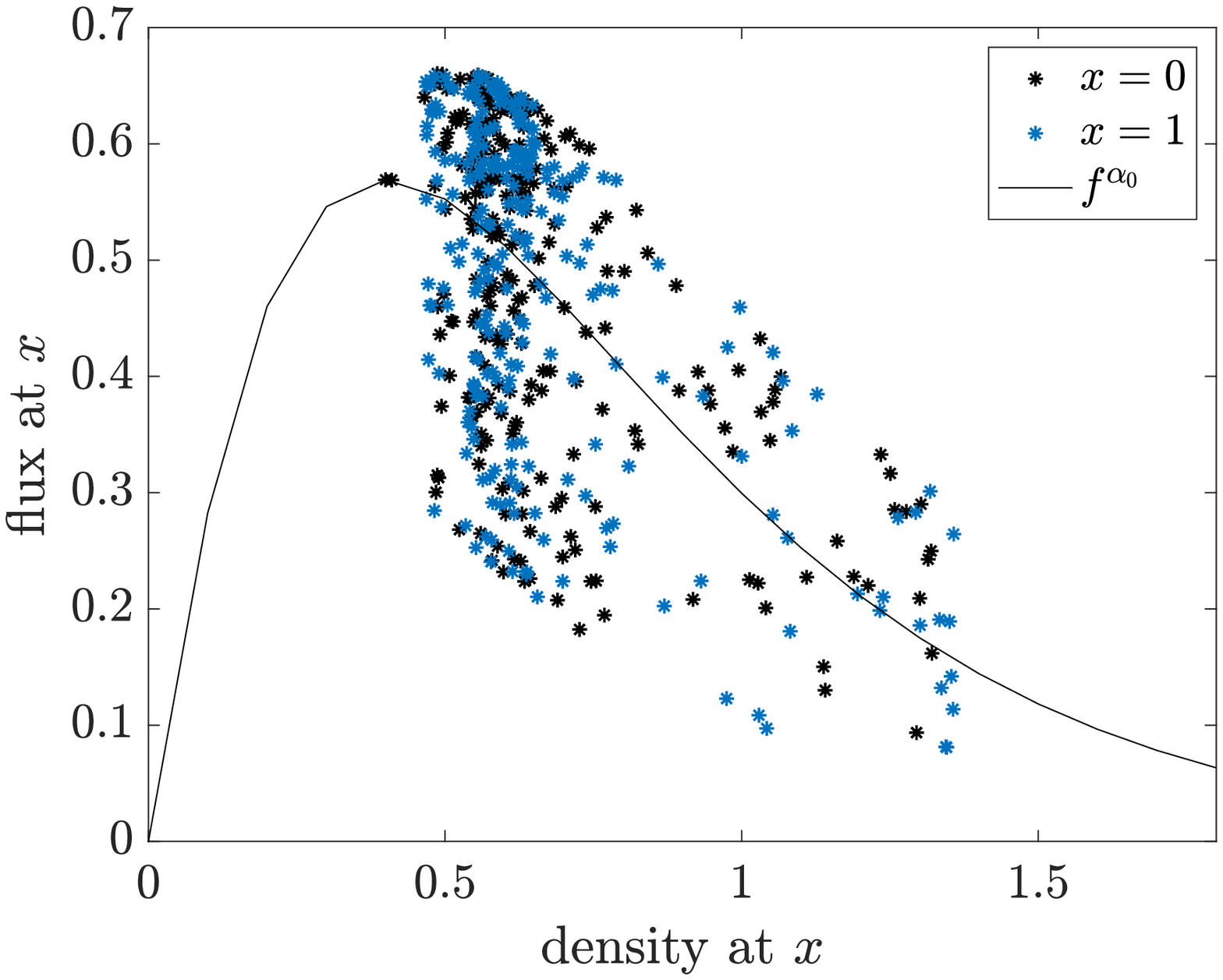}
\subcaption{Sample densities and fluxes at $x = 0$ \\and $x=1$ in congested phase}
\label{fig:SampleDensFluxLWRCongested}
\end{subfigure}
\begin{subfigure}[c]{0.5\textwidth}
\includegraphics[width=0.95\textwidth]{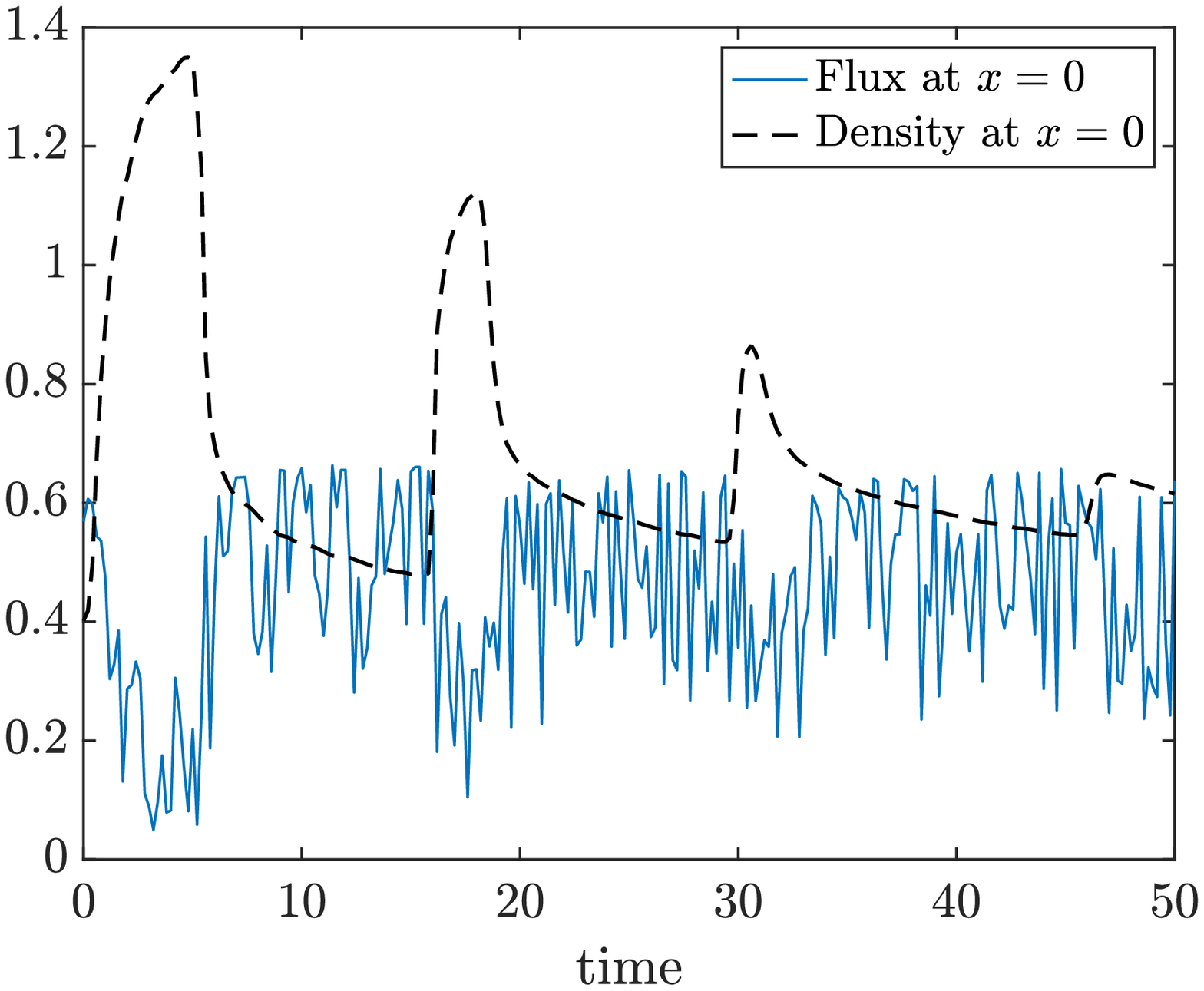}
\subcaption{Sample densities and fluxes at $x = 0$\\ in congested phase}
\label{fig:SampleDensAndFluxLWRCongested}
\end{subfigure}
\end{figure}

\section{Conclusions}
\label{sec:conclusions}
We have successfully incorporated random flux functions for scalar conservation laws in the sense of PDMPs. Additionally, we derived a sufficient condition for an arbitrary family of Lipschitz continuous flux functions such that we can guarantee the existence of a PDMP. The motivation of scattering effects in macroscopic models has been recovered in numerical simulation results in the case of a production and traffic flow model.

To cover more complex dynamics, like space dependent flux functions, the theory can be extended in a suitable way as future research. This can be relevant to model traffic accidents and models, where spatial events can happen. Additionally, systems of conservation laws should be examined as deterministic dynamics for PDMPs since the extension to $L^1$ solutions is not straightforward anymore.

\appendix

\bibliographystyle{siamplain}
\bibliography{references}

\begin{thebibliography}{10}

\bibitem{BauerWTengl}
{\sc H.~Bauer}, {\em Probability Theory}, vol.~23 of De Gruyter Studies in
  Mathematics, Walter de Gruyter \& Co., Berlin, 1996,
  \url{https://doi.org/10.1515/9783110814668},
  \url{http://dx.doi.org/10.1515/9783110814668}.
\newblock Translated from the fourth (1991) German edition by Robert B. Burckel
  and revised by the author.

\bibitem{Bressan2000}
{\sc A.~Bressan}, {\em Hyperbolic Systems of Conservation Laws}, vol.~20 of
  Oxford Lecture Series in Mathematics and its Applications, Oxford University
  Press, Oxford, 2000.

\bibitem{ApiceGoettlichHertyBenedetto}
{\sc C.~D'Apice, S.~G{\"o}ttlich, M.~Herty, and B.~Piccoli}, {\em Modeling,
  Simulation, and Optimization of Supply Chains}, SIAM, Philadelphia, PA, 2010,
  \url{https://doi.org/10.1137/1.9780898717600},
  \url{http://dx.doi.org/10.1137/1.9780898717600}.

\bibitem{DegondRinghofer2007}
{\sc P.~Degond and C.~Ringhofer}, {\em Stochastic dynamics of long supply
  chains with random breakdowns}, SIAM J. Appl. Math., 68 (2007), pp.~59--79,
  \url{https://doi.org/10.1137/060674302},
  \url{http://dx.doi.org/10.1137/060674302}.

\bibitem{Feng2008}
{\sc J.~Feng and D.~Nualart}, {\em Stochastic scalar conservation laws},
  Journal of Functional Analysis, 255 (2008), pp.~313--373,
  \url{https://doi.org/10.1016/j.jfa.2008.02.004}.

\bibitem{Fjordholm2017}
{\sc U.~S. Fjordholm, S.~Lanthaler, and S.~Mishra}, {\em Statistical solutions
  of hyperbolic conservation laws: foundations}, Archive for Rational Mechanics
  and Analysis, 226 (2017), pp.~809--849,
  \url{https://doi.org/10.1007/s00205-017-1145-9}.

\bibitem{Forestier2015}
{\sc L.~Forestier-Coste, S.~G{\"o}ttlich, and M.~Herty}, {\em Data-fitted
  second-order macroscopic production models}, SIAM J. Appl. Math., 75 (2015),
  pp.~999--1014, \url{https://doi.org/10.1137/140989832},
  \url{http://dx.doi.org/10.1137/140989832}.

\bibitem{GoettlichKnapp2017}
{\sc S.~G\"ottlich and S.~Knapp}, {\em Semi-{M}arkovian capacities in
  production network models}, Discrete Contin. Dyn. Syst. Ser. B, 22 (2017),
  pp.~3235--3258, \url{https://doi.org/10.3934/dcdsb.2017090}.

\bibitem{GoettlichMartinSickenberger}
{\sc S.~G{\"o}ttlich, S.~Martin, and T.~Sickenberger}, {\em Time-continuous
  production networks with random breakdowns}, Netw. Heterog. Media, 6 (2011),
  pp.~695--714, \url{https://doi.org/10.3934/nhm.2011.6.695},
  \url{http://dx.doi.org/10.3934/nhm.2011.6.695}.

\bibitem{GoettlichKnapp2018}
{\sc S.~Göttlich and S.~Knapp}, {\em Load-dependent machine failures in
  production network models}, arXiv,  (2018),
  \url{https://arxiv.org/abs/http://arxiv.org/abs/1806.03091v1}.

\bibitem{HoldenRisebro}
{\sc H.~{Holden} and N.~H. {Risebro}}, {\em A mathematical model of traffic
  flow on a network of unidirectional roads.}, SIAM J. Math. Anal., 26 (1995),
  pp.~999--1017, \url{https://doi.org/10.1137/S0036141093243289}.

\bibitem{Holden1997}
{\sc H.~Holden and N.~H. Risebro}, {\em Conservation laws with a random
  source}, Applied Mathematics and Optimization, 36 (1997), pp.~229--241,
  \url{https://doi.org/10.1007/s002459900061}.

\bibitem{Holden2015}
{\sc H.~Holden and N.~H. Risebro}, {\em Front Tracking for Hyperbolic
  Conservation Laws}, vol.~152 of AMS, Springer, Heidelberg, 2nd~ed., 2015,
  \url{https://doi.org/10.1007/978-3-662-47507-2}.

\bibitem{Jacobsen2006}
{\sc M.~Jacobsen}, {\em Point Process Theory and Applications}, Probability and
  its Applications, Birkh\"auser Boston, Inc., Boston, MA, 2006.
\newblock Marked point and piecewise deterministic processes.

\bibitem{Lemaire2017}
{\sc V.~Lemaire, M.~Thieullen, and N.~Thomas}, {\em Exact simulation of the
  jump times of a class of piecewise deterministic markov processes}, J. Sci.
  Comput.,  (2017), \url{https://doi.org/10.1007/s10915-017-0607-4}.

\bibitem{Li2012}
{\sc J.~Li, Q.-Y. Chen, H.~Wang, and D.~Ni}, {\em Analysis of {LWR} model with
  fundamental diagram subject to uncertainties}, Transportmetrica, 8 (2012),
  pp.~387--405, \url{https://doi.org/10.1080/18128602.2010.521532}.

\bibitem{Lions2014}
{\sc P.-L. Lions, B.~Perthame, and P.~E. Souganidis}, {\em Scalar conservation
  laws with rough (stochastic) fluxes: the spatially dependent case},
  Stochastic Partial Differential Equations. Analysis and Computations, 2
  (2014), pp.~517--538, \url{https://doi.org/10.1007/s40072-014-0038-2}.

\bibitem{Mishra2016}
{\sc S.~Mishra, N.~H. Risebro, C.~Schwab, and S.~Tokareva}, {\em Numerical
  solution of scalar conservation laws with random flux functions},
  {SIAM}/{ASA} Journal on Uncertainty Quantification, 4 (2016), pp.~552--591,
  \url{https://doi.org/10.1137/120896967}.

\bibitem{Ni2018}
{\sc D.~Ni, H.~K. Hsieh, and T.~Jiang}, {\em Modeling phase diagrams as
  stochastic processes with application in vehicular traffic flow}, Applied
  Mathematical Modelling. Simulation and Computation for Engineering and
  Environmental Systems, 53 (2018), pp.~106--117,
  \url{https://doi.org/10.1016/j.apm.2017.08.029}.

\bibitem{Piccoli2012}
{\sc B.~Piccoli and A.~Tosin}, {\em Vehicular traffic: a review of continuum
  mathematical models}, in Mathematics of complexity and dynamical systems.
  {V}ols. 1--3, Springer, New York, 2012, pp.~1748--1770,
  \url{https://doi.org/10.1007/978-1-4614-1806-1_112}.

\bibitem{Seibold2013}
{\sc B.~Seibold, M.~R. Flynn, A.~R. Kasimov, and R.~R. Rosales}, {\em
  Constructing set-valued fundamental diagrams from jamiton solutions in second
  order traffic models}, Networks and Heterogeneous Media, 8 (2013),
  pp.~745--772, \url{https://doi.org/10.3934/nhm.2013.8.745}.

\bibitem{Wang2011}
{\sc H.~Wang, D.~Ni, Q.-Y. Chen, and J.~Li}, {\em Stochastic modeling of the
  equilibrium speed-density relationship}, Journal of Advanced Transportation,
  47 (2011), pp.~126--150, \url{https://doi.org/10.1002/atr.172}.

\end{thebibliography}
\end{document}